\newtheorem{thma}{Lemma}
\newtheorem{thmb}[thma]{Theorem}
\def\imod#1{\allowbreak\mkern10mu({\operator@font mod}\,\,#1)}
\newcommand{\txt}{\textrm}
\newcommand{\xx}{\textbf{x}}
\newcommand{\kkk}{\textbf{k}}
\newcommand{\yy}{\textbf{y}}
\newcommand{\beq}{\begin{equation}}
\newcommand{\eeq}{\end{equation}}
\newcommand{\beqq}{\begin{equation*}}
\newcommand{\eeqq}{\end{equation*}}
\newcommand{\bal}{\begin{align}}
\newcommand{\eali}{\end{align}}
\newcommand{\ball}{\begin{align*}}
\newcommand{\ealii}{\end{align*}}
\newcommand{\eps}{\varepsilon}
\numberwithin{equation}{section}
\numberwithin{equation}{section}
\begin{document}
\title{A Note On a Theorem of Heath-Brown and Skorobogatov}
\author{Mike Swarbrick Jones}
\address{  School of Mathematics, Bristol University, Bristol, BS8  1TW}  

\email{mike.swarbrickjones@bristol.ac.uk}

\begin{abstract} We generalise a result of Heath-Brown and Skorobogatov \cite{HBS} to show that a certain class of varieties over a number field $k$ satisfies Weak Approximation and the Hasse Principle, provided there is no Brauer-Manin obstruction.  \\  \\  \textbf{Mathematics Subject Classification (2000)}. 14G05 (11D57, 11G25, 11P55, 14F22, 14G25).

\end{abstract}

\maketitle

\section{Introduction}

Let $k$ be a number field with $[k: \mathbb{Q}]=m$, and ring of integers $\mathfrak{o}$.  Let $K$ be a finite extension of $k$ with $[K: k]=n$, and let $\tau_1, \dots , \tau_n$ be a $k$-basis of $K$.  For $\xx \in k^n$, we let $N(\xx) = N_{K/k}( x^{(1)} \tau_1 + \cdots + x^{(n)} \tau_n)$ be a norm form of $K/k$.  
The subject of this note is the affine variety $X$, defined by the Diophantine equation $$P(t)= N(\xx),$$  where $P(t)$ is a polynomial with coefficients in $k$.
Let $\overline{k}$ be an algebraic closure of $k$.  If $P(t)$ has exactly two solutions in $k$, and no other roots in $\overline{k}$, then we can immediately change variables to obtain the equation \beq\label{maineq} t^{a_0}(1-t)^{a_1}= \alpha N( \xx),\eeq where $\alpha \in k^{*}$ and $a_0, a_1$ are positive integers.  The culmination of \cite{CHS} and \cite{HBS} is the following theorem, under the additional assumption that $k= \mathbb{Q}$:
 
\begin{thmb}\label{mainthm}The Brauer-Manin obstruction is the only obstruction to the Hasse Principle and Weak Approximation on any smooth projective model of the open subset of the variety $\eqref{maineq}$, given by  $P(t) \neq 0$. \end{thmb}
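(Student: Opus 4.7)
The plan is to follow the strategy of \cite{HBS} and adapt each step to the number field $k$, using the Brauer--Manin hypothesis to verify the compatibility conditions at the various stages of descent. First I would reduce the theorem to establishing strong (or weak) approximation on the open variety away from $P(t)=0$: fix a finite set $S$ of places of $k$, containing the archimedean places and the places of bad reduction, together with an adelic point $(P_v) \in \tilde{X}(\mathbf{A}_k)$ orthogonal to $\mathrm{Br}(\tilde{X})$; the goal is to produce a rational point of $X$ that is $v$-adically close to $P_v$ for every $v \in S$.

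Next, I would carry out the descent. Torsors under the obvious quasi-trivial torus associated to \eqref{maineq} parametrize simultaneous ideal factorisations of $(t)$ and $(1-t)$ in $\mathfrak{o}$; passing to an adelic point on such a torsor, the condition $t^{a_0}(1-t)^{a_1} = \alpha N(\xx)$ becomes a system of local norm conditions at the primes dividing $t$ and $1-t$, plus a congruence condition modulo a fixed ideal of $\mathfrak{o}$ controlled by $\alpha$ and the discriminant of $K/k$. The Brauer--Manin hypothesis on $(P_v)$ guarantees that this local data lifts consistently, exactly as in \cite{HBS}.

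The analytic heart of the argument is then to choose $t \in k$ close to $t(P_v)$ for $v \in S$, whose prime ideal factors $\mathfrak{p}$ outside $S$ either split completely in $K$ or lie in residue classes for which local solvability of the norm equation is automatic. This requires an equidistribution statement for primes of $\mathfrak{o}$ in residue classes that split completely in $K$, compatible with a set of $v$-adic neighbourhood conditions at $v \in S$; over $k = \mathbb{Q}$ this is the analytic input used in \cite{HBS}, and the generalisation proceeds via the Chebotarev density theorem together with the sieve and Poisson-summation estimates of Heath--Brown's norm form method, suitably formulated in the ring $\mathfrak{o}$.

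The main obstacle will be the last step: the analytic estimates in \cite{HBS} are written for $\mathbb{Z}$ and invoke geometry of numbers and large sieve bounds that have to be transferred to $\mathfrak{o}$ uniformly in the discriminant and degree $m=[k:\mathbb{Q}]$. One must in particular keep track of the unit group $\mathfrak{o}^{*}$, which at archimedean places $v \in S$ enters the approximation bookkeeping in a nontrivial way, and of the class group, whose presence must be absorbed into the choice of the auxiliary torsor in the descent step. Once these points are handled, the Hensel lemma argument lifting $t$ to a full solution $(t,\xx) \in X(k)$ near $(P_v)_{v\in S}$, and the final deduction of the Hasse principle and weak approximation on $\tilde{X}$, follow the pattern of \cite{HBS} and \cite{CHS} essentially verbatim.
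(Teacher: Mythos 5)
Your descent step is roughly in the right spirit, but you have mischaracterised what the descent reduces to, and hence what the analytic core of the proof must accomplish. After the descent of \cite{CHS} and \cite{HBS}, the task is \emph{not} to select $t \in k$ whose prime ideal factors split in $K$, and no Chebotarev-type equidistribution of primes is invoked. The descent instead reduces Theorem~\ref{mainthm} to proving the Hasse principle and weak approximation on the explicit smooth affine variety
\beqq
Y : \quad a\,N(\xx) + b\,N(\yy) = z^n \neq 0,
\eeqq
for fixed $a,b \in \mathfrak{o}$. The analytic input is therefore a \emph{counting} problem: one must show that the number of $(\xx,\yy,z) \in \mathfrak{o}^{2n+1}$ solving this equation in a dilated box around the prescribed local data, subject to a congruence condition modulo a fixed ideal $\mathfrak{n}$, is asymptotically positive. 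Over $\mathbb{Q}$ this was done in \cite{HBS} by the Hardy--Littlewood circle method, and the ``modest link missing'' which this note supplies is precisely to run that circle method for $Y$ over a general number field $k$: exponential sums over $V = k\otimes_{\mathbb{Q}}\mathbb{R}$ paired via the trace, minor arcs handled by the Birch--Davenport--Lewis mean-value bound \cite{BDL} together with Birch's Weyl lemma over number fields \cite{Birch}, major arcs handled via Skinner's lemma \cite{SK}, and positivity of the singular series and singular integral deduced from the local solvability hypothesis.

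Your third paragraph substitutes a different strategy entirely: sieve estimates, Poisson summation, Chebotarev, and selection of $t$ with controlled prime factorisation. That is the Schinzel/Dirichlet route familiar from other parts of this literature, not the route of \cite{HBS}, and you do not supply the details that would be needed to make it work. As a result, your diagnosis of the main obstacle --- transferring large sieve bounds, and tracking the unit and class groups in a prime-selection step --- points at the wrong place. If you follow \cite{HBS}, the step to generalise is the circle method lower bound for $Y$ over $\mathfrak{o}$, not an equidistribution statement for prime ideals of $\mathfrak{o}$.
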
  

There was only a modest link missing to show this theorem for general $k$, which is straightforward by present standards, and is our aim here.  The key step of \cite{CHS} and \cite{HBS} is a descent argument, which reduces the problem to showing the validity of the Hasse principle and weak approximation on the smooth affine quasi-projective variety $Y \subset \mathbb{P}^{2n}$ defined by 

\beq\label{equation1}a N(\xx) + b N(\yy) = z^n \neq 0 , \eeq for given $a,b \in \mathfrak{o}$.  In \cite{HBS} this was achieved by finding an asymptotic lower bound for the number of suitably constrained integer solutions to $\eqref{equation1}$ in a large box.  The principle tool was the Hardy-Littlewood circle method for $k=\mathbb{Q}$.  We shall use a more general version of the circle method here to handle arbitrary number fields.

In \cite{CHS}, the Brauer group of the variety $X$ was calculated for some special cases, to identify some situations where the Brauer--Manin obstruction is empty.  For example if $a_0$ and $a_1$ are coprime, and $K/k$ does not contain any non trivial cyclic extension of $k$, then $\txt{Br}(k)=\txt{Br}(X)$, and so the Hasse principle and weak approximation both hold.  On the other hand, it is known that there can be obstructions to weak approximation if $K$ is a cyclic extension of $k$.  For an example due to Coray, see \cite[\S 9]{CTSal}.

\textbf{Acknowledgements:}  \linebreak
I'd like to thank my supervisor Tim Browning for improving the quality of the paper substantially.  Also thanks to Tony V\'arilly-Alvarado for showing interest in the problem.

\section{Notation}Let $\mathfrak{o}$ be the ring of integers of $k$.  Without loss of generality, suppose that $\tau_1, \dots , \tau_n$ is a $\mathfrak{o}$-basis of $K$.  Let $\mathfrak{n}$ be an integral ideal of $\mathfrak{o}$, with $\mathbb{Z}$-basis $\omega_1, \dots , \omega_m$.  Let $\sigma_1, \dots \sigma_{n_1}$ be the distinct real embeddings of $k$, and let $\sigma_{n_1+1}, \dots \sigma_{n_1 + 2n_2}$ be the distinct complex embeddings, such that $\sigma_{n_1+i}$ is conjugate to $\sigma_{n_1+n_2+i}$.  Put $k_i$ to be the completion of $k$ with respect to the embedding $\sigma_i$, for $i=1, \dots , n_1 + n_2$.

Define $V$ to be the commutative $\mathbb{R}$-algebra $\oplus^{n_1+n_2}_{i=1}k_{i}  \cong k \otimes_\mathbb{Q} \mathbb{R}$.  
For an element $x \in V$, we write $\pi_i(x)$ for its projection onto the $i$th summand, (so $x= \oplus \pi_i(x)$).  There is a canonical embedding of $k$ into $V$ given by $\alpha \rightarrow \oplus\sigma_i(\alpha)$.  We identify $k$ with its image in $V$.  Under this image, $\mathfrak{n}$ forms a lattice in $V$, and $\omega_1, \dots , \omega_n$ form a real basis for $V$.  We define trace and norm maps on $V$ as 

\begin{equation*}
\textrm{Tr} ({\alpha}) = \sum_{i=1}^{n_1} \pi_i(\alpha) +  2 \sum_{i=n_1+1}^{n_1+n_2} \mathfrak{R}( \pi_i(\alpha)),
\end{equation*}

\begin{equation*}
\txt{Nm} ({\alpha})= \prod_{i=1}^{n_1} \pi_i(\alpha) \prod_{i=n_1+1}^{n_1+n_2} |\pi_i(\alpha)|^2 ,
\end{equation*} respectively.  We also define a distance function $| \cdot |$ on $V$, 
$$|x| = |x_1 \omega_1 + \cdots + x_m \omega_m| = \max\limits_{i} | x_i |.$$  This extends to $V^s$, for $s \in \mathbb{N}$: if $\textbf{x} = (x^{(1)}, \dots ,\ x^{(s)}) \in V^s,$ then
$$|\textbf{x}| =  \max\limits_{j} | x^{(j)} |.$$  We note that there will be some constant $c$, dependent only on $k$ and our choice of basis $\omega_1, \dots , \omega_m$, such that \beq\label{cdef} |\pi_i(x)| \leq c|x| \eeq for all $x \in V$ and $1 \leq i \leq m$ (since each $\pi_i$ is linear, this is clear).  Also for any $v,w \in V$, we have 

\beq\label{height} |vw| \ll |v||w|, \quad \txt{Nm}(v) \ll |v|^m \quad \txt{and} \quad |v^{-1}| \ll \frac{|v|^{m-1}}{\txt{Nm}(v)}. \eeq
 
For any point $\textbf{v} \in V^s$, let $\mathfrak{B}(\textbf{v})$ be the box \begin{equation}\label{box2}\mathfrak{B}(\textbf{v}) = \{  \xx \in V^s : |\xx-\textbf{v}|<\rho  \},\end{equation} where $\rho$ is a fixed real number $0 <\rho <1 $.  For a set $\mathcal{{A}} \subset V^s$, and positive real number $P$, we define $P \mathcal{{A}}$ to be the set $\{ \xx \in V^s : P^{-1} \xx \in \mathcal{A} \}$.  

\section{Statement of the Main Lemma}

Consider the smooth quasi-projective variety $Y'$ given by the equation $\eqref{equation1}$ together with the inequalities $\xx \neq 0$, $\yy \neq 0$, $z \neq 0$, $N(\xx) \neq 0$, $N(\yy) \neq 0$.  It is sufficient to prove weak approximation on $Y'$, since weak approximation is a birational invariant on smooth varieties.

We assume equation $\eqref{equation1}$ has a solution in $k_\nu$ for all places $\nu$ of $k$.  Suppose we are given a finite set of places $S$ and a set of local solutions $(\xx_\nu, \yy_\nu , z_\nu) \in Y'(k)$ for each $\nu \in S$.  For any fixed $\eta>0$, our task is to find a $k$-point $(\xx,\yy,z) \in Y'(k)$ such that $$ |x^{(i)} - x_\nu^{(i)}|_\nu < \eta, \quad |y^{(i)} - y_\nu^{(i)}|_\nu<\eta, \quad |z - z_\nu|_\nu<\eta$$ for all $1 \leq i \leq n$, and $\nu \in S$, where $|\cdot|_\nu$ denotes the valuation on $k_\nu$.  Without loss of generality, we can assume that $S$ contains all the infinite places.

For the finite places, we note that by the Chinese Remainder Theorem, finding a rational point which is $\mathfrak{p}$-adically close to some set of $\mathfrak{p}$-adic points, is equivalent to finding an integral point which is restricted to some congruence class modulo some integral ideal.  In our case, we shall let the ideal be $\mathfrak{n}$ as in the notation section.  So we are given $(\xx_{\mathfrak{n}}, \yy_{\mathfrak{n}}, z_{\mathfrak{n}}) \in \mathfrak{o}^{2n+1}$ which is a non-singular solution of $\eqref{equation1}$ modulo $\mathfrak{n}$.  

Our task is now to find a solution $(\xx, \yy, z) \in \mathfrak{o}^{2n+1}$ with \beq\label{cong1} |x^{(i)}- P x^{(i)}_\nu|_\nu< P \eta, \quad |y^{(i)}- Py^{(i)}_\nu|_\nu< P\eta, \quad |z- P z_\nu|_\nu <P \eta  \eeq for each infinite place $\nu$, and \beq\label{cong2} x^{(i)} \equiv x^{(i)}_{\mathfrak{n}} , \quad y^{(i)} \equiv y^{(i)}_{\mathfrak{n}}, \quad z \equiv z_{\mathfrak{n}} \quad \mod{\mathfrak{n}}. \eeq Our main lemma is then the following:

\begin{thma}\label{mainlem} Suppose that for each prime $\mathfrak{p}$ there is a non-singular solution to $\eqref{equation1}$ satisfying $\eqref{cong2}$ in $\mathfrak{p}$-adic integers.  Then $\eqref{equation1}$ has a solution in $\mathfrak{o}^{2n+1}$ satisfying $\eqref{cong1}$ and $\eqref{cong2}$, provided $P$ is sufficiently large. \end{thma}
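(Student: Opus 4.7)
The plan is to attack Lemma \ref{mainlem} by the Hardy--Littlewood circle method, suitably adapted to the number field $k$. Let $N(P)$ be the weighted number of solutions $(\xx,\yy,z) \in \mathfrak{o}^{2n+1}$ of $a N(\xx) + b N(\yy) = z^{n}$ satisfying \eqref{cong1} and \eqref{cong2}. The goal is to establish an asymptotic formula of the shape
\beqq
N(P) = \mathfrak{S}\, \mathfrak{I}\, P^{(n+1)m} + o\bigl(P^{(n+1)m}\bigr),
\eeqq
where $\mathfrak{S}$ is a singular series and $\mathfrak{I}$ is a singular integral. Since the counting function is nonnegative, the bound $N(P) > 0$ for $P$ large enough will be deduced from the positivity of $\mathfrak{S}\,\mathfrak{I}$.

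First, I would detect the equation via characters on $V$. Using the additive character $\psi(\alpha) = e^{2\pi i \txt{Tr}(\alpha)}$ together with the duality between $\mathfrak{n}$ and $(\mathfrak{n}\mathfrak{d})^{-1}$, where $\mathfrak{d}$ is the different of $k/\mathbb{Q}$, $N(P)$ can be written as an integral over a fundamental domain for $V/(\mathfrak{n}\mathfrak{d})^{-1}$ of a product of exponential sums, each of the form
\beqq
T(\theta) = \sum_{\xx \in \mathfrak{n}^{n},\ \xx \equiv \xx_{\mathfrak n}} w(\xx/P)\, \psi\bigl(\theta N(\xx)\bigr),
\eeqq
with an analogous factor coming from $\yy$ and the monomial $z^n$. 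Here $w$ is a smooth weight supported in a box near the real data $\xx_\nu$. The standard major/minor arc dissection then decomposes the fundamental domain into neighbourhoods of rationals $a/\mathfrak{q}$ of small norm, and its complement.

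Next, on the major arcs I would evaluate $T(\theta)$ by completing the sum, writing $\theta = a/\mathfrak{q} + \beta$ and separating the sum into a complete exponential sum modulo $\mathfrak{q}\mathfrak{n}$ times a smooth integral over $V^n$. Extending the sum over $\mathfrak{q}$ and letting $P \to \infty$ then produces the usual factorisation of the main term into $\mathfrak{S}\,\mathfrak{I} P^{(n+1)m}$, with
\beqq
\mathfrak{S} = \prod_{\mathfrak{p}} \sigma_{\mathfrak{p}}, \qquad \sigma_{\mathfrak{p}} = \lim_{r \to \infty} |\mathfrak{p}|^{-r\cdot 2n m} \#\{(\xx,\yy,z) \pmod{\mathfrak{p}^r \mathfrak{n}} : \eqref{equation1},\eqref{cong2}\}.
\eeqq
Non-singular $\mathfrak{p}$-adic solutions satisfying \eqref{cong2}, which are provided by hypothesis, give $\sigma_{\mathfrak{p}} > 0$ via Hensel's lemma; a Lang--Weil count handles the primes of good reduction, ensuring $\sigma_{\mathfrak{p}} = 1 + O(|\mathfrak{p}|^{-1-\delta})$ there, so the product converges and $\mathfrak{S} > 0$. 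Similarly the local solutions at infinite places force $\mathfrak{I} > 0$.

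The main obstacle is of course the minor arc bound: we need to show that the contribution of $\theta$ away from well-approximable points is of strictly lower order than $P^{(n+1)m}$. Here the multiplicative structure of the norm form $N(\xx)$, which over the Galois closure factorises as a product of $n$ conjugate linear forms, is decisive. Following the strategy of \cite{HBS} over $\mathbb{Q}$, I would estimate $T(\theta)$ by exploiting this factorisation: a difference or Weyl-style manoeuvre in the conjugate variables reduces matters to counting lattice points in boxes with linear constraints, and a Dirichlet approximation at a \emph{rational} point of $k$ converts diophantine irrationality of $\theta$ into cancellation. The number-field ingredients (geometry of numbers in $V$, counting ideals with norm bounds, Dirichlet's theorem over $\mathfrak{o}$) replace their $\mathbb{Z}$ analogues in \cite{HBS} without essential loss. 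Assembling these pieces yields the desired saving on the minor arcs and completes the asymptotic, and hence the lemma, for $P$ sufficiently large.
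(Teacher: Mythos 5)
Your overall strategy --- circle method over $V = k \otimes_\mathbb{Q} \mathbb{R}$, major/minor arc dissection, main term $\mathfrak{S}\mathfrak{I}P^{(n+1)m}$, positivity of $\mathfrak{S}$ and $\mathfrak{I}$ from the local hypotheses --- matches the paper. So does the treatment of the major arcs, although the paper simply cites Skinner's Lemma 7 from \cite{SK} rather than redoing the completion argument.

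However, your minor arc strategy is genuinely different from the paper's, and as sketched it has a gap. You propose to win the saving by a pointwise Weyl-type estimate on the norm-form sums $T(\theta)$ themselves, ``exploiting the factorisation into conjugate linear forms.'' But a norm form of degree $n$ in exactly $n$ variables sits on the critical line, and one cannot expect any usable pointwise saving from Weyl differencing alone; indeed, $\int_{\mathcal{I}}|S_1(\alpha)|^2\,d\alpha \asymp P^{mn+o(1)}$, so $S_1$ is essentially of maximal $L^2$ size. What makes the argument of \cite{HBS}, and hence the paper, close is a division of labour between the three factors: the norm-form sums contribute only the mean-value estimate $\int_{\mathcal{I}}|S_j(\alpha)|^2\,d\alpha \ll P^{mn+\eps}$ (a number-field version of Birch--Davenport--Lewis \cite{BDL}, proved by counting representations $N(\xx)=N(\yy)$ via a divisor bound, \emph{not} by Weyl differencing), while \emph{all} of the cancellation on the minor arcs comes from a pointwise Weyl bound for the single-variable sum $S_3(\alpha)=\sum_z e(\mathrm{Tr}(\alpha z^n))$, supplied by Birch's Lemma 3 from \cite{Birch}. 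These are then combined through
$$\int_{\mathfrak{m}} S_1 S_2 S_3\,d\alpha \ \ll\ \Bigl(\sup_{\mathfrak{m}}|S_3|\Bigr)\,\Bigl(\int_{\mathcal{I}}|S_1|^2\Bigr)^{1/2}\Bigl(\int_{\mathcal{I}}|S_2|^2\Bigr)^{1/2},$$
and this Cauchy--Schwarz step is precisely what you leave unsaid. The same split (an $L^2$ bound on $T_1,T_2$ together with a pointwise bound on $T_3$ via Birch's lemma with $P=\mathrm{Nm}(\mathfrak{a}_\gamma)$) is also what the paper uses to prove absolute convergence of the singular series, in place of the Lang--Weil argument you suggest. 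Your approach would need to be reworked to isolate the $z^n$ variable as the source of the minor-arc saving.
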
  This will be enough to prove weak approximation on the variety $Y$, and will thus establish Theorem $\ref{mainthm}$ for general $k$.

\section{The Circle Method} We set $$S_1(\alpha)= \sum_{\xx} e(\text{Tr}(\alpha a N(\xx))),$$ $$S_2(\alpha)= \sum_{\yy} e(\text{Tr}(\alpha b N(\yy))),$$ $$S_3(\alpha)= \sum_{z} e(\text{Tr}(\alpha z^n)),$$ with all sums running over modulo classes defined by $\eqref{cong2}$, and inside the dilated boxes $P\mathfrak{B}_1 \subset V^n, P\mathfrak{B}_2 \subset V^n ,P\mathfrak{B}_3 \subset V$ respectively, where $$\mathfrak{B}_1=\mathfrak{B}\left(\bigoplus_{i=1}^{n_1 + n_2} \xx_{\nu_i}\right) \quad \mathfrak{B}_2=\mathfrak{B}\left(\bigoplus_{i=1}^{n_1 + n_2} \yy_{\nu_i}\right) \quad \mathfrak{B}_3=\mathfrak{B}\left(\bigoplus_{i=1}^{n_1 + n_2} z_{\nu_i}\right),$$  $\nu_i$ being the place corresponding to the embedding $\sigma_i$.  

Also, we let $\mathfrak{B}'\subset V^{2n+1}$ be the product $ \mathfrak{B}' = \mathfrak{B}_1 \times  \mathfrak{B}_2 \times  \mathfrak{B}_3$.  From the observation that the constant $c$ in $\eqref{cdef}$ exists, we see that to satisfy $\eqref{cong1}$, it will be sufficient that $( \xx,  \yy,  z) \in P \mathfrak{B}'$, where $\rho = \rho(\eta)$ has been chosen appropriately small in the definition $\eqref{box2}$.  Furthermore, by choosing $\rho$ sufficiently small, we can guarantee that $( \xx,  \yy,  z) \in Y'(K)$

We define $\mathcal{I}$ as: $$\mathcal{I}:= \{ \alpha = \alpha_1 \omega_1 + \cdots + \alpha_m \omega_m \in V : 0 \leq \alpha_i \leq 1 \}. $$  Let $\mathcal{N}( P )$ to be the number of points $(\xx,\yy, z) \in \mathfrak{o}^{2n+1} \cap P \mathfrak{B}'$ which are a solution to $\eqref{equation1}$, and such that the conditions $\eqref{cong2}$ are satisfied.  We have $$\mathcal{N}( P ) = \int_{\mathcal{I}} S_1(\alpha)S_2(\alpha)S_3(-\alpha) d \alpha.$$  
For any $\gamma \in k$, define the denominator ideal of $\gamma$ as $$\mathfrak{a}_\gamma = \{\kappa \in \mathfrak{o} : \kappa \gamma \in \mathfrak{n} \}.$$  
We also set $$\mathfrak{M}_{\gamma}(\theta) = \{ x \in \mathcal{I} : |x-\gamma| \leq P^{-n + m(n-1)\theta} \},$$ for some $\theta>0$ to be fixed later, and define a special subset of $\mathcal{I}$, $$\mathfrak{M} = \mathfrak{M}(\theta)=  \bigcup_{\substack{\gamma \in k \\  \txt{Nm}(\mathfrak{a}_\gamma) \ll P^{m(n-1) \theta}}} \mathfrak{M}_{\gamma},$$ which we shall call the `major arcs'.  We define the `minor arcs' as the compliment of the major arcs, $\mathfrak{m}(\theta)=\mathcal{I} \setminus \mathfrak{M}(\theta)$.

Finally we shall state once and for all that implied constants in any $\ll, \gg$, or $O(\cdot)$ quantifiers, are dependent only only on $k,K, \mathfrak{n}$ with fixed choice of basis, and $\mathfrak{B}$.

\subsection{The Minor Arcs}  First we shall get suitable estimates for $S_1(\alpha)$, and $S_2(\alpha)$.  Note that $N$ is a norm form on $K/\mathbb{Q}$ with $\mathbb{Z}$-basis $\{\omega_i \tau_j\}$.  So the argument of \cite[Lemma 1]{BDL} holds here (in fact we have extra restrictions on our variables but this does not affect the argument).  This results in the estimate

 \beq\label{S1} \int_{\mathcal{I}} |S_j(\alpha)|^2 d \alpha \ll P^{mn+\eps} \eeq  for $j=1,2,$ and any $\eps >0$.  

Now we want to get a bound on $|S_3(\alpha)|$ for $\alpha$ on the minor arcs.

\begin{thma}\label{skinner} Let $\eps > 0$ and suppose $0 < \Delta < 1$.  Either:  \begin{enumerate}[(i)] \item $|S_3(\alpha)| \ll P^{m-\Delta/2^{n-1}+\eps}$ \txt{, or} \item there exists $0 \neq \mu \in \mathfrak{n}, \lambda \in \mathfrak{n}$ such that $$|\mu| \ll P^{(n-1) \Delta} \, \, \txt{and} \, \, |\mu \alpha - \lambda| < P^{-n +(n-1)\Delta}.$$  
\end{enumerate} \end{thma}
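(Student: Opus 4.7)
The proof is a classical Weyl differencing argument extended to the number-field setting (in the spirit of work of Schmidt and Skinner on forms in many variables over number fields).

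First I would apply the Weyl differencing identity iteratively $n-1$ times to $S_3(\alpha)$. At each step one uses $|S|^2=\sum_{h} S_h$ (or its Cauchy--Schwarz variant) to introduce a difference variable $h\in\mathfrak{n}$ with $|h|\ll P$, and the degree of the polynomial inside the exponential drops by one. After $n-1$ steps one arrives at an inequality of the shape
$$|S_3(\alpha)|^{2^{n-1}} \ll P^{mA} \sum_{\mathbf{h}} \Bigl|\sum_z e\bigl(\text{Tr}(n!\,\mu_{\mathbf{h}}\,\alpha\, z)\bigr)\Bigr|,$$
where $A$ is an explicit constant, the outer sum is over $\mathbf{h}=(h_1,\ldots,h_{n-1})\in\mathfrak{n}^{n-1}$ with $|h_i|\ll P$, $\mu_{\mathbf{h}} = h_1 h_2\cdots h_{n-1}$, and the inner sum is over $z$ in the relevant residue class of $\mathfrak{n}$ inside a box of size $\ll P$.

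Next, I would evaluate the inner linear exponential sum as a geometric progression over the lattice $\mathfrak{n}\subset V$, working component-wise across $V=\bigoplus_{i=1}^{n_1+n_2} k_i$. This gives a bound of the form
$$\Bigl|\sum_z e\bigl(\text{Tr}(n!\,\mu_{\mathbf{h}}\,\alpha\,z)\bigr)\Bigr| \ll \prod_{i=1}^{n_1+n_2}\min\bigl(P,\|\pi_i(n!\,\mu_{\mathbf{h}}\,\alpha)\|^{-1}\bigr),$$
where $\|\cdot\|$ denotes distance to the lattice. Assuming case (i) fails, i.e.\ $|S_3(\alpha)| \gg P^{m-\Delta/2^{n-1}+\eps}$, raising to the $2^{n-1}$-th power forces a large number of tuples $\mathbf{h}$ for which $\pi_i(n!\,\mu_{\mathbf{h}}\,\alpha)$ is close to the lattice in every component $i$. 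Using a standard divisor-type bound for the number of factorisations of an element of $\mathfrak{o}$ as a product of $n-1$ factors, we conclude that there are many distinct nonzero $\mu\in\mathfrak{n}$ with $|\mu|\ll P^{n-1}$ for which $|\mu\alpha-\lambda_{\mu}|\ll P^{-1+O(\eps)}$ for some $\lambda_{\mu}\in\mathfrak{n}$.

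Finally, I would invoke a number-field analogue of Davenport's shrinking lemma applied to this set of $\mu$. From many $\mu$ lying in a box of size $P^{n-1}$ with $\mu\alpha$ within $P^{-1}$ of $\mathfrak{n}$, differences between suitably chosen pairs yield a single nonzero $\mu\in\mathfrak{n}$ with $|\mu|\ll P^{(n-1)\Delta}$ and $\lambda\in\mathfrak{n}$ with $|\mu\alpha-\lambda|<P^{-n+(n-1)\Delta}$, which is precisely conclusion (ii). The main technical obstacle is this final shrinking step: the elementary one-dimensional Dirichlet argument used over $\mathbb{Q}$ must be replaced by a Minkowski-style geometry-of-numbers argument on the lattice $\mathfrak{n}\subset V\cong\mathbb{R}^m$, and it is here that the factor $m$ enters the exponents appearing later in the definition of the major arcs.
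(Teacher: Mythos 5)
Your proposal is essentially a from-scratch reconstruction of the number-field version of Weyl's inequality, whereas the paper's proof is a one-line citation: it first replaces $S_3(\alpha)$ by $S'_3(\alpha)=\sum_{z\in\mathfrak{n}\cap P\mathfrak{B}_3}e(\text{Tr}(\alpha(z+z_{\mathfrak{n}})^n))$ at a cost of $O(P^{m-1})$, observes that $S'_3$ has exactly the shape of the exponential sums in \cite[Eq.~2.6]{Birch}, and then invokes \cite[Lemma 3]{Birch} directly (remarking that the generalisation from sums over $\mathfrak{o}$ to sums over an arbitrary ideal $\mathfrak{n}$ is trivial). Your outline --- iterated Weyl differencing with difference variables in $\mathfrak{n}$, evaluation of the resulting linear lattice sums, a divisor-type count, and a final Minkowski-style shrinking step --- is precisely what goes into Birch's proof, so the plan is sound; but each of these ingredients is genuinely delicate over a number field and amounts to re-proving Birch's lemma rather than applying it, which is why the paper does not take that route. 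Two small corrections: you should make explicit the reduction from $S_3$ (a sum over a coset of $\mathfrak{n}$ inside $\mathfrak{o}$) to a sum purely over $\mathfrak{n}$, since Birch's lemma is stated for the full ring of integers; and the factor $m$ in the exponents defining the major arcs does not enter in the shrinking step inside this lemma --- note that the exponents in alternative (ii) are $(n-1)\Delta$ and $-n+(n-1)\Delta$, with no $m$. The $m$ appears only afterwards, when the paper converts (ii) into $|\alpha-\lambda/\mu|\ll P^{-n+m(n-1)\Delta}$ and $\textrm{Nm}(\mathfrak{a}_\gamma)\ll P^{m(n-1)\Delta}$ via the height inequalities $|\mu^{-1}|\ll|\mu|^{m-1}/\textrm{Nm}(\mu)$ and $\textrm{Nm}(\mu)\ll|\mu|^m$ from \eqref{height}.
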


\begin{proof}  Consider the sum $$S'_3(\alpha)= \sum_{z} e ( \txt{Tr}(\alpha (z+ z_\mathfrak{n})^n)),$$ where $z$ now runs over the set $ \mathfrak{n} \cap P \mathfrak{B}_3$.  By comparing the domains of summation, we see that $$S_3(\alpha)= S'_3(\alpha)+ O(P^{m-1}),$$ and thus if assumption $(i)$ fails, then it also fails with $S_3(\alpha)$ replaced by $S'_3(\alpha)$.  Put $f(z)= \sum_{i=1}^n \txt{Tr}(\omega_i(z+ z_\mathfrak{n})^n) \omega_i $.  Then $f$ is of the type defined by \cite[Eq 2.6]{Birch}.  Furthermore, in the notation of \cite{Birch}, $$S'_3(\alpha) = \sum_{z \in P \mathfrak{B}_3} e [\alpha \cdot f(z)],$$ so our result is given by  \cite[Lemma 3]{Birch}.  Note that this lemma was for exponential sums over $\mathfrak{o}$ rather than general $\mathfrak{n}$, but it is trivial to generalise to this setting.  \end{proof}

Under the assumption that $\alpha$ satisfies $(ii)$, we have (using $\eqref{height}$)
\begin{align*}  \left| \alpha - \frac{\lambda}{\mu} \right| &\ll |\mu^{-1}||\mu \alpha - \lambda|  \\
&\ll |\mu|^{m-1} P^{-n + (n-1)\Delta}  \\
&\ll P^{-n + m(n-1)\Delta}.  \end{align*}  If we put $\gamma = \frac{\lambda}{\mu}$, we see that $\langle \mu \rangle \subset \mathfrak{a}_\gamma$, and so $$\txt{Nm}(\mathfrak{a}_\gamma) \leq \txt{Nm}\langle \mu \rangle \ll P^{m(n-1)\Delta}.$$  Hence $\alpha \in \mathfrak{M}(\Delta).$  So we deduce

\beq\label{S3} |S_3 (\alpha)|  \ll P^{m-\Delta / 2^{n-1} + \eps}, \eeq for all $\alpha \in \mathfrak{m}(\Delta)$.

Combining this with $\eqref{S1}$ and using Cauchy's inequality we obtain:

\begin{thma}\label{minor}$$\int_{\mathfrak{m}(\Delta)} S_1(\alpha)S_2(\alpha)S_3(-\alpha) d \alpha \ll P^{(n+1)m-\delta}$$ for some $\delta=\delta(\Delta)>0$. \end{thma}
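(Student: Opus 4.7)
The plan is to decouple the three sums using the trivial bound $|S_3(\alpha)| \le \sup_{\alpha \in \mathfrak{m}(\Delta)} |S_3(\alpha)|$ on the minor arcs, and then use Cauchy's inequality to handle $S_1, S_2$ via the second moment estimate $\eqref{S1}$. This is the standard way of exploiting a pointwise bound on one factor together with mean-square control on the others.

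More precisely, first I would bound
\[
\int_{\mathfrak{m}(\Delta)} S_1(\alpha) S_2(\alpha) S_3(-\alpha)\, d\alpha \;\ll\; \Bigl(\sup_{\alpha \in \mathfrak{m}(\Delta)} |S_3(\alpha)|\Bigr) \int_{\mathcal{I}} |S_1(\alpha) S_2(\alpha)|\, d\alpha,
\]
where I have extended the domain of integration for the product $|S_1 S_2|$ from $\mathfrak{m}(\Delta)$ to all of $\mathcal{I}$ (losing nothing, since the integrand is nonnegative). The supremum is estimated by $\eqref{S3}$, yielding $\ll P^{m - \Delta/2^{n-1} + \eps}$.

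Next I would apply Cauchy's inequality in the form
\[
\int_{\mathcal{I}} |S_1(\alpha) S_2(\alpha)|\, d\alpha \;\le\; \Bigl( \int_{\mathcal{I}} |S_1(\alpha)|^2\, d\alpha \Bigr)^{1/2} \Bigl( \int_{\mathcal{I}} |S_2(\alpha)|^2\, d\alpha \Bigr)^{1/2},
\]
and invoke $\eqref{S1}$ (for both $j=1$ and $j=2$) to get this quantity $\ll P^{mn + \eps}$. Multiplying the two estimates gives
\[
\int_{\mathfrak{m}(\Delta)} S_1 S_2 S_3(-\alpha)\, d\alpha \;\ll\; P^{mn + \eps} \cdot P^{m - \Delta/2^{n-1} + \eps} \;=\; P^{(n+1)m - \Delta/2^{n-1} + 2\eps}.
\]
Choosing $\eps < \Delta/2^{n}$ gives $\delta := \Delta/2^{n-1} - 2\eps > 0$, which yields the claim.

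There is essentially no obstacle here: the hard work has already been done in establishing the pointwise minor-arc bound $\eqref{S3}$ (via the Birch-type estimate of Lemma \ref{skinner}) and the second-moment bound $\eqref{S1}$ (via the argument from \cite{BDL}). The present lemma is purely a matter of combining these ingredients via Cauchy's inequality and a routine choice of $\eps$ relative to $\Delta$.
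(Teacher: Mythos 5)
Your argument is exactly the one the paper intends: the paper gives no separate proof of this lemma, only the remark ``Combining this with \eqref{S1} and using Cauchy's inequality we obtain,'' which is precisely the decoupling via the pointwise bound \eqref{S3} for $S_3$ on $\mathfrak{m}(\Delta)$, Cauchy--Schwarz together with \eqref{S1} for the product $S_1 S_2$, and a small choice of $\eps$ relative to $\Delta$. Your write-up simply makes that one-line hint explicit, with the correct exponent bookkeeping.
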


\subsection{The Major Arcs}

For $\kkk = (k^{(1)}, \dots , k^{(2n+1)}) \in \mathfrak{n}^{2n+1}$, we define the function 

\begin{align*} F(k^{(1)}, \dots , k^{(2n+1)}) =  &a N (k^{(1)} + x^{(1)}_\mathfrak{n}, \dots ,  k^{(n)} + x^{(n)}_\mathfrak{n}) \\ & + b N (k^{(n+1)} + y^{(1)}_\mathfrak{n}, \dots , k^{(2n)}  + y^{(n)}_\mathfrak{n}) - (k^{(2n+1)}+z_\mathfrak{n})^n. \end{align*}  Note that the assumption of Lemma $\ref{mainlem}$ is equivalent to the assumption that $F(\kkk)=0$ has a non-singular solution in $\mathfrak{n}_\mathfrak{p}$ for every prime $\mathfrak{p}$.

Put $$S_\gamma =\txt{Nm}(\mathfrak{a}_\gamma)^{-(2n+1)} \sum_{\kkk \mod {\mathfrak{n} \mathfrak{a}_\gamma}} e( \textrm{Tr}(\gamma F(\kkk))),$$ the sum being over $\kkk \in \mathfrak{n}^{2n+1}$.  We then define $$\mathfrak{S}(\Delta) = \sideset{}{'}\sum_{\txt{Nm}(\mathfrak{a}_\gamma)\leq P^{\Delta}}S_\gamma, $$ where the dash indicates that only one $\gamma$ should be taken from each equivalence class modulo $\mathfrak{n}$.  We call this the \textit{singular series}.  Finally, put $$\mathfrak{I}(\Delta) = \int_{|\beta|<P^{\Delta}} \int_{\mathfrak{B}'} e(\textrm{Tr}(\beta F(\kkk))) d\kkk d\beta.$$  This is the \textit{singular integral}.

\begin{thma}\label{major} For $\Delta$ sufficiently small,
  $$ \int_{\mathfrak{M}(\Delta)}S_1(\alpha)S_2(\alpha)S_3(-\alpha) d \alpha = \mathfrak{S}(\Delta)  \mathfrak{I}(\Delta) P^{(n+1)m}+O(P^{(n+1)m - \delta}),  $$ for some $\delta = \delta(\Delta) >0$. \end{thma}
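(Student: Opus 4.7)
The plan is to execute the standard major-arc analysis of the circle method, adapted to the ring $\mathfrak{o}$ and the lattice $\mathfrak{n}$.

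First I would check that $\mathfrak{M}(\Delta)$ is a disjoint union of the arcs $\mathfrak{M}_\gamma$ as $\gamma$ runs over representatives modulo $\mathfrak{n}$, provided $\Delta$ is sufficiently small. Any overlap would produce a nonzero $\eta = \gamma - \gamma' \in k$ with $|\eta| \ll P^{-n + m(n-1)\Delta}$ and denominator ideal of norm $\ll P^{2m(n-1)\Delta}$; using \eqref{height} to compare $|\eta^{-1}|$ with $\txt{Nm}(\eta)$, this is impossible for $\Delta$ small.

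Next, on each $\mathfrak{M}_\gamma$ I would write $\alpha = \gamma + \beta$ with $|\beta| \leq P^{-n + m(n-1)\Delta}$ and perform the substitution $x^{(j)} = k^{(j)} + x_{\mathfrak{n}}^{(j)}$, $y^{(j)} = k^{(n+j)} + y_{\mathfrak{n}}^{(j)}$ for $1 \leq j \leq n$, and $z = k^{(2n+1)} + z_{\mathfrak{n}}$. This collapses the product of the three exponential sums into
$$S_1(\alpha)S_2(\alpha)S_3(-\alpha) = \sum_{\kkk} e(\txt{Tr}(\alpha F(\kkk)))$$
with $\kkk \in \mathfrak{n}^{2n+1}$ ranging over a translate of $P\mathfrak{B}'$. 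Since $\gamma\mathfrak{n}\mathfrak{a}_\gamma \subset \mathfrak{n}$, the phase $e(\txt{Tr}(\gamma F(\kkk)))$ depends only on $\kkk$ modulo $\mathfrak{n}\mathfrak{a}_\gamma$, and the sum factors as
$$\sum_{\kkk_0 \bmod \mathfrak{n}\mathfrak{a}_\gamma} e(\txt{Tr}(\gamma F(\kkk_0))) \sum_{\kkk \equiv \kkk_0} e(\txt{Tr}(\beta F(\kkk))),$$
the inner sum running over $\kkk$ in the translated box.

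I would then replace the inner lattice sum by the volume integral over $P\mathfrak{B}'$ weighted by density $\txt{Nm}(\mathfrak{a}_\gamma)^{-(2n+1)}$. The error is controlled by the oscillation of the phase over a fundamental cell of $\mathfrak{n}\mathfrak{a}_\gamma$: since $|\nabla F| \ll P^{n-1}$ on $P\mathfrak{B}'$ and the cell diameter is $\ll \txt{Nm}(\mathfrak{n}\mathfrak{a}_\gamma)^{1/m}$, a standard trapezoidal-type bound makes the total error negligible once $\Delta$ is small. Rescaling $\www = P\www'$ and $\beta = P^{-n}\beta'$ produces the factor $P^{(n+1)m}$; the sum over $\kkk_0$ delivers $\txt{Nm}(\mathfrak{a}_\gamma)^{2n+1} S_\gamma$; and summing over $\gamma$ gives $\mathfrak{S}(\Delta)\mathfrak{I}(\Delta) P^{(n+1)m}$. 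The discrepancy between the $\beta$-range $P^{-n + m(n-1)\Delta}$ and the cutoff $P^\Delta$ of $\mathfrak{I}(\Delta)$, and between the $\gamma$-range $P^{m(n-1)\Delta}$ and $P^\Delta$ of $\mathfrak{S}(\Delta)$, is absorbed into the error term using tail bounds on $S_\gamma$ and on the integrand.

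The main obstacle is the sum-to-integral step, which must be uniform in $\gamma$ and $\beta$: the per-cell oscillation must survive multiplication by the number of residue classes and integration over $\beta$ in a region of measure $P^{-nm + O(\Delta)}$. Choosing $\Delta$ small (roughly, $\Delta < c/(m(n-1))$ for a suitable constant $c$) balances these contributions and simultaneously guarantees the disjointness of the major arcs in Step 1 and the negligibility of the tails of $\mathfrak{S}$ and $\mathfrak{I}$.
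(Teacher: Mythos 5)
Your sketch is the standard major-arc computation adapted to a number field, and it is essentially sound, but it is not the route the paper takes: the paper's entire proof of this lemma is a one-line citation to \cite[Lemma 7]{SK} (Skinner), which already establishes exactly this kind of major-arc asymptotic for forms over number fields with congruence and box constraints, and applies directly to $S_1S_2S_3$ here. What citing Skinner buys is economy and the outsourcing of a delicate technical computation; what your route buys is self-containedness and visibility into exactly where the hypothesis that $\Delta$ is sufficiently small enters (disjointness of the arcs, uniformity in $\gamma$ and $\beta$ of the sum-to-integral error, and the tails of $\mathfrak{S}$ and $\mathfrak{I}$). If you did write yours out in full, the step to treat most carefully is the periodicity claim: that $e(\textrm{Tr}(\gamma F(\kkk)))$ depends only on $\kkk \bmod \mathfrak{n}\mathfrak{a}_\gamma$ requires the trace pairing to land in $\mathbb{Z}$ on the relevant lattices, which involves the different of $k/\mathbb{Q}$ and is not immediate from $\gamma\mathfrak{a}_\gamma \subset \mathfrak{n}$ alone; Skinner's normalisation of the exponential is calibrated precisely so that this works, and the paper inherits that convention.
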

  
  \begin{proof}This follows from \cite[Lemma 7]{SK}.  \end{proof}
  
Combining this lemma with Lemma $\ref{minor}$, we see \begin{align*} \mathcal{N}(P) &= \int_{\mathfrak{M}(\Delta)} S(\alpha) d \alpha + \int_{\mathfrak{m}(\Delta)} S(\alpha) d \alpha \\ 
&= \mathfrak{S}(\Delta) \mathfrak{I}(\Delta) P^{(n+1)m}+O(P^{(n+1)m - \delta}). \end{align*}   So all that remains to show is that under the assumption of Lemma $\ref{mainlem}$, $\mathfrak{S}(\Delta)$ and $\mathfrak{I}(\Delta)$ have strictly positive limits as $P \rightarrow \infty$.

\begin{thma}\label{integral} For our box $\mathfrak{B}'$ chosen as before, $\mathfrak{I}(\Delta) \rightarrow \mathfrak{I}_0$, a constant as $P \rightarrow \infty$.  Furthermore $\mathfrak{I}_0>0$. \end{thma}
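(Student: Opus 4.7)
The plan is to show convergence of $\mathfrak{I}(\Delta)$ to a positive constant $\mathfrak{I}_0$ in two steps: first a rapid-decay estimate for the inner $\kkk$-integral that makes $\int_V J(\beta)\,d\beta$ absolutely convergent, then a Fourier-inversion/co-area argument identifying the limit with a strictly positive surface integral over the real zero set of $F$.

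First I would study the inner integral
\[ J(\beta) := \int_{\mathfrak{B}'} e(\txt{Tr}(\beta F(\kkk)))\,d\kkk. \]
By construction $\mathfrak{B}'$ sits in a small neighbourhood of a non-singular real point of $Y'$, and shrinking $\rho$ if necessary one can arrange that the Jacobian $D_\kkk F: V^{2n+1} \to V$ is surjective throughout $\mathfrak{B}'$; this uses precisely the conditions $\xx, \yy, z \neq 0$ and $N(\xx), N(\yy) \neq 0$ that define $Y'$. Compactness of $\overline{\mathfrak{B}'}$ then gives $|(D_\kkk F)^T \beta| \geq c_0 |\beta|$ uniformly, so for each $\beta \neq 0$ some coordinate direction in $\kkk$ makes the gradient of $\txt{Tr}(\beta F(\kkk))$ of size $\gg |\beta|$. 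Iterated integration by parts in that direction (the standard non-stationary phase lemma) yields
\[ |J(\beta)| \ll_N (1+|\beta|)^{-N} \]
for every $N \geq 1$. In particular $\int_V |J(\beta)|\,d\beta < \infty$, and therefore
\[ \mathfrak{I}(\Delta) \longrightarrow \mathfrak{I}_0 := \int_V J(\beta)\,d\beta \quad \txt{as } P \to \infty. \]

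For positivity I would appeal to Fourier inversion on $V \cong \mathbb{R}^m$. Let $\chi_\varepsilon$ be a non-negative Schwartz approximation to the Dirac delta at $0 \in V$, with $\widehat{\chi_\varepsilon} \to 1$ pointwise as $\varepsilon \to 0$. Fubini together with dominated convergence (justified by the decay of $J(\beta)$) gives
\[ \mathfrak{I}_0 = \lim_{\varepsilon \to 0} \int_{\mathfrak{B}'} \chi_\varepsilon(F(\kkk))\,d\kkk, \]
and the surjectivity of $D_\kkk F$ on $\mathfrak{B}'$ lets the co-area formula identify this limit with the Leray residue
\[ \int_{\{F=0\} \cap \mathfrak{B}'} \frac{d\sigma(\kkk)}{\sqrt{\det\bigl(D_\kkk F \,(D_\kkk F)^T\bigr)}}, \]
where $d\sigma$ is the induced surface measure on the smooth $2nm$-dimensional submanifold $\{F=0\}\cap\mathfrak{B}'$. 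This is strictly positive because the hypothesis of a non-singular real solution at each infinite place supplies a point of $\{F=0\}$ in the interior of $\mathfrak{B}'$.

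The only real technical obstacle lies in the first step: the direction of integration by parts must be chosen uniformly in $\beta$, and the $|\beta|^{-N}$ bound must be uniform in $\kkk \in \mathfrak{B}'$. Both requirements follow from compactness of $\overline{\mathfrak{B}'}$ combined with the uniform maximal rank of $D_\kkk F$, which ultimately rests on the non-singularity of the local real solution at infinity built into the definition of $Y'$. The remainder is a standard Fourier-analytic unfolding.
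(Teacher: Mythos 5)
Your overall architecture matches what the paper (and the [HBS] argument it defers to) has in mind: rewrite the integrals in real coordinates, establish decay of the inner integral $J(\beta)=\int_{\mathfrak{B}'}e(\txt{Tr}(\beta F(\kkk)))\,d\kkk$, use it to pass to the limit, and then realise $\mathfrak{I}_0$ as a positive Leray (co-area) integral over $\{F=0\}\cap\mathfrak{B}'$, the positivity coming from the fact that the box is centred at a non-singular real solution. The positivity step of your proposal is sound.

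The genuine gap is the claimed rapid decay $|J(\beta)|\ll_N(1+|\beta|)^{-N}$ for every $N$. The standard non-stationary phase lemma giving super-polynomial decay requires a smooth, compactly supported amplitude, whereas $\mathfrak{B}'$ is a sharp box. Each integration by parts produces boundary terms on the faces of the box, and on a face the tangential component of $\nabla_\kkk\txt{Tr}(\beta F)$ can degenerate even when the full gradient is $\gg|\beta|$, so the iteration cannot be continued indefinitely. Already in one variable $\int_0^1 e(\beta u)\,du\asymp|\beta|^{-1}$ and no faster: sharp cutoffs cap the decay at a fixed polynomial rate. In particular your assertion that $\int_V|J(\beta)|\,d\beta<\infty$ does not follow from what you have written. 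What is actually true, and what the references establish, is a bound $|J(\beta)|\ll\min(1,|\beta|^{-C})$ for some explicit $C$ exceeding $m$, obtained by exploiting the large number of variables in $F=aN(\xx)+bN(\yy)-z^n$ (i.e.\ its Birch rank), not a single gradient bound; or, alternatively, one proves only the conditional convergence $\mathfrak{I}(\Delta)\to\mathfrak{I}_0$ directly, which is all the lemma asserts. Smoothing the box is not a drop-in fix either, since it changes the count $\mathcal{N}(P)$ being computed. So: right picture, but the "arbitrary polynomial decay" step needs to be replaced by the weaker polynomial decay that a sharp box actually admits, justified by the many-variables structure rather than by a single application of non-stationary phase.
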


\begin{proof}  We define the polynomial $$F^{*}(\xx) = F(x^{(1)}_1\omega_1 + \cdots + x^{(1)}_m\omega_m,   \, \, \dots,   \, \, x^{(s)}_1\omega_1 + \cdots + x^{(s)}_m\omega_m),$$ considered as a real polynomial in the $sm$ variables $ \{ x^{(1)}_{1}, \dots , x^{(s)}_{m} \}$.  In the definition of $\mathfrak{I}$, we can just as easily think of the inner integral being over $\mathbb{R}^{mn}$ with $F$ replaced by $F^*$, and the outer integral as being over the real variables $\beta_1 , \dots , \beta_m$, where $\beta = \beta_1 \omega_1 + \cdots + \beta_m \omega_m$.  Then this lemma is routine, and indeed an argument analagous to the one used in \cite{HBS} can be used.   The key point is that the box is centred at a nonsingular point in $V^n$ (note that a non-singular solution to $F$ in $V^{n}$ corresponds to a non-singular solution to $F^*$ in $\mathbb{R}^{mn}$).

 \end{proof}

\begin{thma}\label{series} We have \begin{enumerate}[(i)] \item $\mathfrak{S}(\infty)$ exists, \item $\mathfrak{S}(\Delta)-\mathfrak{S}(\infty) \ll P^{-\zeta}$, for some positive $\zeta= \zeta(\Delta)$, and \item  $\mathfrak{S}(\infty) > 0$. \end{enumerate} \end{thma}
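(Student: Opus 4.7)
The plan is to carry out the standard singular-series analysis of the circle method, adapted to the number field setting, in three stages corresponding to the three claims.

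\textbf{Multiplicativity and Euler product.} First I establish that $\gamma \mapsto S_\gamma$ is multiplicative in the denominator ideal: if $\gamma = \gamma_1 + \gamma_2$ with $\mathfrak{a}_{\gamma_1}$ and $\mathfrak{a}_{\gamma_2}$ coprime, then $S_{\gamma_1+\gamma_2} = S_{\gamma_1} S_{\gamma_2}$, by the Chinese Remainder Theorem applied to $\mathfrak{o}/\mathfrak{n}\mathfrak{a}_{\gamma_1}\mathfrak{a}_{\gamma_2}$. Once absolute convergence is in hand, this allows me to factor the singular series as a formal Euler product
$$\mathfrak{S}(\infty) = \prod_\mathfrak{p} T_\mathfrak{p}, \qquad T_\mathfrak{p} = \sum_{k \geq 0} \sideset{}{'}\sum_{\substack{\gamma \bmod \mathfrak{n} \\ \mathfrak{a}_\gamma = \mathfrak{p}^k}} S_\gamma.$$

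\textbf{Convergence and truncation (parts (i) and (ii)).} The key analytic input is a Weyl-type bound of the form $|S_\gamma| \ll \txt{Nm}(\mathfrak{a}_\gamma)^{-1-\delta_0+\eps}$ for some $\delta_0 > 0$. Because $F$ splits as a sum of three pieces in disjoint blocks of variables---two norm forms in $\xx,\yy$ and the monomial $z^n$ in the remaining variable---the complete exponential sum defining $S_\gamma$ factors accordingly, and the three factors may be treated separately. The two norm-form sums are essentially Gauss sums and are controlled by the explicit $\mathfrak{p}$-adic structure of $N$, along the lines of the argument in \cite[Lemma 1]{BDL}; the $z^n$ factor is bounded by the Weyl-type estimate of \cite[Lemma 3]{Birch} specialised to a single variable. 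Combining these with multiplicativity, summing over $\gamma$ using the count of ideals of bounded norm in $\mathfrak{o}$, and using the count of $\gamma \bmod \mathfrak{n}$ with a prescribed denominator ideal, yields absolute convergence of $\mathfrak{S}(\infty)$ and gives (ii) with, say, $\zeta = \delta_0 \Delta /2$ upon restricting the summation to $\txt{Nm}(\mathfrak{a}_\gamma) > P^\Delta$.

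\textbf{Positivity (part (iii)).} The standard orthogonality calculation of the circle method identifies each local factor as a $\mathfrak{p}$-adic density,
$$T_\mathfrak{p} = \lim_{r\to\infty} \txt{Nm}(\mathfrak{p})^{-2nr}\cdot\#\bigl\{\kkk \in (\mathfrak{n}/\mathfrak{p}^r\mathfrak{n})^{2n+1} : F(\kkk)\equiv 0 \imod{\mathfrak{p}^r}\bigr\}.$$
The hypothesis of Lemma \ref{mainlem} supplies a non-singular $\mathfrak{p}$-adic zero of $F$, and Hensel's lemma then forces $T_\mathfrak{p}>0$ at every prime. For all primes $\mathfrak{p}$ outside a fixed finite exceptional set (those of bad reduction of the affine variety $F=0$), the Lang--Weil estimates refine this to $T_\mathfrak{p} = 1 + O(\txt{Nm}(\mathfrak{p})^{-1-\delta_1})$ for some $\delta_1>0$. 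Combined with positivity at the finitely many bad primes, this shows that the Euler product converges to a strictly positive limit.

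\textbf{The main obstacle} is securing the Weyl-type bound for $S_\gamma$ with exponent strictly sharper than $-1$, uniformly in $\mathfrak{p}$. A generic form of degree $n$ in $2n+1$ variables would not meet Birch's criterion once $n$ is large, but the particular additive decomposition of $F$ into two norm forms and a power is exactly what lets each local factor enjoy the required extra saving; making this saving explicit and uniform over all primes is where the bulk of the work lies.
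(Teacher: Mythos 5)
Your approach to parts (i) and (ii) is genuinely different from the paper's, and there is a gap in it. You propose a pointwise Weyl-type bound $|S_\gamma| \ll \txt{Nm}(\mathfrak{a}_\gamma)^{-1-\delta_0+\eps}$, obtained by splitting the complete sum into the three blocks $T_1,T_2,T_3$ and bounding each separately, describing the norm-form factors as ``essentially Gauss sums'' controlled as in \cite[Lemma 1]{BDL}. This misreads the reference: \cite[Lemma 1]{BDL} is a mean-value estimate for the \emph{incomplete} generating function, namely the bound $\int_\mathcal{I}|S_j(\alpha)|^2\,d\alpha \ll P^{mn+\eps}$ recorded here as \eqref{S1}; it is not a pointwise bound for the complete sum $T_1(\gamma)$ over a residue system. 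Moreover, for $n\geq 3$ the complete sum of a degree-$n$ norm form in $n$ variables is not a Gauss sum, so there is no exact evaluation to lean on, and a uniform pointwise estimate over all prime-power moduli is exactly the difficulty you yourself flag at the end as ``the main obstacle.'' Your outline leaves it unresolved.

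The paper sidesteps the issue entirely by never bounding $T_1(\gamma)$ or $T_2(\gamma)$ pointwise. Instead it re-runs the major-arc analysis (Lemmas~\ref{major} and~\ref{integral}) with $|S_1(\alpha)|^2$ in place of $S_1(\alpha)S_2(\alpha)S_3(-\alpha)$, and compares the result with the mean-value bound \eqref{S1} to extract the \emph{averaged} estimate
$$\sideset{}{'}\sum_{\txt{Nm}(\mathfrak{a}_\gamma)\leq R}\txt{Nm}(\mathfrak{a}_\gamma)^{-2n}|T_1(\gamma)|^2 \ll R^\varpi$$
for every $\varpi>0$ and every $R\geq 1$, with the same for $T_2$. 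Combined via Cauchy--Schwarz with the pointwise Weyl bound on the single-variable sum $T_3(\gamma)$ coming from Lemma~\ref{skinner} (your use of \cite[Lemma 3]{Birch} for $T_3$ is the one part of your pointwise plan that does go through), this gives $\mathfrak{S}_R \ll R^{\varpi - \Delta/2^{n-1}+\delta}$ on each dyadic block, whence (i) and (ii). That averaging device --- trading a hard pointwise exponential-sum bound for an $L^2$ bound on the generating function you already control --- is the key manoeuvre your proposal is missing. Your sketch of part (iii), multiplicativity via CRT, Euler product, and positivity of local densities by Hensel from the nonsingular $\mathfrak{p}$-adic solution, agrees with what the paper (which calls the step routine) has in mind.
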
  We follow the arguments of \cite{HBS}.  Consider the sum $$T_1(\gamma)= \sum_{\kkk_1 \mod { \mathfrak{a}_\gamma}} e(\txt{Tr}(\gamma F_1(\kkk_1))),$$ where $F_1(\kkk_1)= a N (k_1 + x^{(1)}_\mathfrak{n}, \dots ,  k_n + x^{(n)}_\mathfrak{n})$.  Define $T_2$ analogously, and set $$T_3(\gamma) =  \sum_{k \mod { \mathfrak{a}_\gamma}} e(\txt{Tr}(\gamma (k + z_{\mathfrak{n}})^n)).$$  
Then clearly $S_\gamma =\txt{Nm}(\mathfrak{a}_\gamma)^{-(2n+1)} T_1(\gamma)T_2(\gamma)T_3(-\gamma)$.  We will consider the dyadic range:  $$\mathfrak{S}_R=\sideset{}{'}\sum_{R/2 < \txt{Nm}(\mathfrak{a}_\gamma) \leq R} \txt{Nm}(\mathfrak{a}_\gamma)^{-(2n+1)} T_1(\gamma) T_2(\gamma) T_3(- \gamma).$$ 

If we repeat the argument of Lemmas $\ref{major}$ and $\ref{integral}$ with $|S_1(\alpha)|^2$ in place of $S_1(\alpha)S_2(\alpha)S_3(-\alpha)$, we find that

$$\sideset{}{'}\sum_{\txt{Nm}(\mathfrak{a}_\gamma)\ll P^\Delta} \int_{\mathfrak{M}_\gamma(P^\Delta)} |S_1(\alpha)|^2d\alpha = \Sigma_1 \mathfrak{I}_1+O(P^{mn-\delta}),$$ for some $\delta =\delta(\Delta) >0$, and where 

$$\Sigma_1 = \sideset{}{'}\sum_{\txt{Nm}(\mathfrak{a}_\gamma)\leq P^{\Delta}}\txt{Nm}(\mathfrak{a}_\gamma)^{-2n} |T_1(\gamma)|^2,$$ and

$$\mathfrak{I}_1 \sim CP^{mn}$$ for some positive constant $C$.  But 

$$\sideset{}{'}\sum_{\txt{Nm}(\mathfrak{a}_\gamma)\ll P^\Delta} \int_{\mathfrak{M}_\gamma(P^\Delta)} |S_1(\alpha)|^2d\alpha \leq \int_{\mathcal{I}} |S_1(\alpha)|^2 d\alpha \ll P^{mn+\eps} $$ by $\eqref{S1}$.  Note that the estimate holds for any $P \geq 1$, and $\eps>0$.  So if we choose $P$ such that $P^{\Delta} = R$, and put $\varpi = \eps / \Delta$, we see that

$$\sideset{}{'}\sum_{\txt{Nm}(\mathfrak{a}_\gamma)\leq R}\txt{Nm}(\mathfrak{a}_\gamma)^{-2n} |T_1(\gamma)|^2 \ll R^\varpi,$$ for any $R \geq 1$, and $\varpi > 0$.  Similarly we have 

$$\sideset{}{'}\sum_{\txt{Nm}(\mathfrak{a}_\gamma)\leq R}\txt{Nm}(\mathfrak{a}_\gamma)^{-2n} |T_2(\gamma)|^2 \ll R^{\varpi},$$ and so \beq\label{S1S2}\sideset{}{'}\sum_{R/2 < \txt{Nm}(\mathfrak{a}_\gamma) \leq R} \txt{Nm}(\mathfrak{a}_\gamma)^{-2n} |T_1(\gamma) T_2(\gamma)| \ll R^{\varpi} \eeq by Cauchy's inequality.

Now we bound $T_3(\gamma)$.  Let $N=\txt{Nm}(\mathfrak{a}_\gamma)$, and note that \begin{align*}|N^{-1}T_3(\gamma)| &= |N^{-m} \sum_{z  \mod  \langle N \rangle} e(\txt{Tr}(\gamma(z + z_{\mathfrak{n}})^n) | \\ 
&=  |N^{-m} \sum_{z \in \mathfrak{n} \cap N \mathfrak{B}''} e(\txt{Tr}(\gamma(z + z_{\mathfrak{n}})^n) |, \end{align*}
where $\mathfrak{B}'' = \{ x \in V : 0 \leq x_i < 1 \, \, \forall \, i \}$.  So now we can use Lemma $\ref{skinner}$, replacing $S_3$ with the exponential sum on the last line, taking $P=N$, and $\Delta < 1 / m (n-1)$.  
If alternative $(i)$, holds we have \begin{align}\label{S3est}|N^{-1}T_3(\gamma)| &\ll N^{-m}N^{m- \Delta/ 2^{n-1}+ \delta} = N^{- \Delta/ 2^{n-1}+ \delta} \end{align}  for any $\delta>0$.  On the other hand, alternative $(ii)$ implies the existence of some $\mu, \lambda \in \mathfrak{n}$ where $$0 < |\mu| \ll N^{(n-1)\Delta},$$ and $$|\mu \gamma - \lambda|< N^{-n + (n-1)\Delta}.$$  
Note that $\mathfrak{a}_\gamma (\mu \gamma - \lambda) \subseteq \mathfrak{n},$ so that if $$\mu \gamma - \lambda = \sum \theta_i \omega_i,$$ then $\theta_i \txt{N} \in \mathbb{Z}$ for all $i$.  But $|\theta_i N| < N^{(n-1)(\Delta-1)} < 1$, and so $\theta_i = 0$ for all $i$.  
It follows that $\mu \in \mathfrak{a}_\gamma $, so that $ N | \txt{Nm}(\mu)$.  But $$\txt{Nm}(\mu) \ll |\mu|^m \ll N^{m(n-1)\Delta} \ll N^{1-\eps},$$ for some positive $\eps$, and since $\txt{Nm}(\mu) \neq 0$, this is a contradiction if $\txt{N}$ is large enough.  Therefore $\eqref{S3est}$ holds, and combining this with $\eqref{S1S2}$, we arrive at the estimate $$\mathfrak{S}_R\ll R^{\varpi- \Delta/ 2^{n-1}+ \delta}.$$
Since $\varpi$, $\delta$ were arbitrary, $(i)$ and $(ii)$ of our lemma follow immediately.  
The proof of $(iii)$ is routine. For any prime ideal $\mathfrak{p}$, we define $$\mu(\mathfrak{p})= \sum_{j=1}^{\infty} \sum_{\mathfrak{a}_\gamma = \mathfrak{p}^j} S_\gamma,$$ and then $$\mathfrak{S}(\infty) = \prod_\mathfrak{p} \mu(\mathfrak{p}).$$  
Standard arguments show that the assumption that $F$ has a non-singular solution in each $\mathfrak{n}_\mathfrak{p}$ implies that each $\mu(\mathfrak{p})>0$, and that the product is strictly positive.  This completes the proof.
\begin{small}

\end{small}

\end{document}